\providecommand\@dotsep{5}
\renewcommand{\listoftodos}[1][\@todonotes@todolistname]{%
  \@starttoc{tdo}{#1}}
\theoremstyle{plain}
	\newtheorem{theorem}{Theorem}[section]
	\newtheorem{lemma}[theorem]{Lemma}
    \newtheorem{corollary}[theorem]{Corollary}
\theoremstyle{definition}
    \newtheorem*{defn*}{Definition}
    \newtheorem{example}[theorem]{Example}
    \newtheorem*{example*}{Example}
\theoremstyle{remark}
	\newtheorem*{remark*}{Remark}
\newcommand\SetSymbol[1][]{\nonscript\:#1\vert\allowbreak\nonscript\:\mathopen{}}
\providecommand\given{} 
\DeclarePairedDelimiterX\Set[1]\{\}{\renewcommand\given{\SetSymbol[\delimsize]}#1}
\def\zz{{\mathbb{Z}}}
\def\nn{{\mathbb{N}}}
\def\qq{{\mathbb{Q}}}
\def\Z{{\mathbb{Z}}}
\DeclareMathOperator{\pdim}{pdim}
\renewcommand{\mod}{\text{ mod }}
\title{Semigroup Graded Stillman's Conjecture}
\author{John Cobb}
\address{Department of Mathematics, Auburn University, Auburn, AL}
\email{jdcobb3@gmail.com}
\urladdr{\href{https://johndcobb.github.io}{https://johndcobb.github.io}}
\author{Nathaniel Gallup}
\address{Department of Mathematics, Northeastern University Oakland Campus, Oakland, CA}
\email{n.gallup@northeastern.edu}
\urladdr{\href{https://sites.google.com/view/nathanielgallup/home}{https://sites.google.com/view/nathanielgallup/home}}
\author{John Spoerl}
\address{Department of Mathematics, University of Wisconsin, Madison, WI}
\email{john.spoerl@wisc.edu}
\begin{document}

\maketitle
\vspace{-0.3in}
\begin{abstract}
    We resolve Stillman's conjecture for families of polynomial rings that are graded by any abelian group under mild conditions. Conversely, we show that these conditions are necessary for the existence of a Stillman bound. This has applications even for the well-known standard graded case.
\end{abstract}


\section{Introduction}    

Let $k$ be a field and $S=k[x_1,\dots, x_n]$ be the $n$-variable polynomial ring. Famously, the Hilbert syzygy theorem says that any finitely generated $S$-module has projective dimension less than $n$. In 2000, Mike Stillman conjectured that the projective dimension of a homogeneous ideal in a polynomial ring can be bounded just in terms of the number and degrees of the generators provided that $S$ is given the standard grading \cite{PeevaStillman2009}. Importantly, this bound is independent of the number of variables of $S$. This conjecture was proven by Ananyan and Hochster \cite[Theorem C]{AnanyanHochster2016} in 2016, and subsequently reproven by Erman, Sam, and Snowden \cite{ErmanSamSnowden19} and Draisma, Laso\'n, and Leykin \cite{DraismaLasonLeykin19}.

The property that the projective dimension of finitely generated ideals are bounded only in terms of the number and degrees of their generators is sometimes called being \textit{Stillman bounded} and is part of a larger program of similar phenomena of Stillman uniformity \cite{ErmanSamSnowden18}. For instance, Caviglia proved that a Stillman bound on projective dimension is equivalent to one on regularity \cite{peeva2010graded}*{Theorem 29.5} which links to work on the Eisenbud--Goto conjecture \cite{BederMcCullough2011, mccullough2018counterexamples}. More generally, ``projective dimension'' can be swapped out with a host of other ideal invariants \cite{erman2021generalizations}. Due mostly to work in toric geometry, there has been great interest in understanding analogs of such results for gradings by other abelian groups  \cites{HaimanSturmfelds2002, MaclaganSmith04, HeringSchenckSmith06, Costa2006mBlocksCA, Huy07, lozovanu2012vanishing, berkesch2017virtual, Yang19, ChardinNemati20, BrownSayrafi22, chardin2022multigraded, Bruce2022BoundsOM,booms2022virtual, cobb2023syzygies}. Since Stillman uniformity requires finding a bound independent of the number of variables, it is inherently a property of a \textit{family} of graded polynomial rings (e.g. all standard graded polynomial rings) and not of any particular fixed $S$. Ananyan and Hochster established that the family of  $\mathbb{Z}_+$-graded polynomial rings has Stillman bounded projective dimension \cite[page 12]{AnanyanHochster2016} for possibly inhomogeneous ideals. It is our goal to understand which graded families have Stillman uniformity.

The following example demonstrates that allowing the grading group to have infinite descending chains (i.e. the divisibility order is not well-founded) can prevent Stillman bounds on projective dimension. 

\begin{example}\label{ex: no bounded factorization}
    Let $S_n=k[x,y,z_{1},\dots, z_{n}]$ be a $\mathbb{Q}_+$-graded polynomial ring with grading given by $\deg(x)=\deg(y)=\deg(z_i)=1/n$. The homogeneous ideal
    \begin{equation*}
        I_n = \left\langle x^n, y^n, x^{n-1}z_1+x^{n-2}yz_2+\dots + xy^{n-2}z_{n-1} + y^{n-1}z_n\right\rangle
    \end{equation*}
    is generated by $3$ degree $1$ elements. It is shown in \cite{mccullough2011family} that the projective dimension of $S_n/I_n$ is $n+2$. Therefore the family $\{ S_n \}$ of $\mathbb{Q}_+$-graded polynomial rings cannot have a Stillman bound since we can make $n$ arbitrarily large while $I_n$ is generated in degree $1$. 
\end{example}

On the other hand, there is an obvious condition on the grading which is sufficient for the existence of a Stillman bound. If there is a sufficiently nice ``flattening'' map from the grading group to $\mathbb{Z}$, we can regrade our family by $\mathbb{Z}_+$ and apply Ananyan and Hochster's result, since changing the grading does not change projective dimension. However, not all grading groups admit such a map (see Example \ref{ex: no flattening map}). Our main result gives a condition on the grading group that is weaker than having a flattening map and stronger than well-foundedness which is equivalent to the existence of a Stillman bound for any graded family. To be more precise, we need a few definitions.

A grading of a polynomial ring $S = k[X]$ over a (possibly infinite) set of variables $X$ by a (possibly infinitely generated) abelian group $\Gamma$ is a decomposition of $S$ into $k$-submodules
\[ S = \bigoplus_{g\in \Gamma} S_g, \hspace{0.2in} \text{with } S_g \cdot S_h \subset S_{g+h}\] such that for all variables $x \in X$, $x \in S_g$ for some $g \in \Gamma$. If $s \in S_g$ we say that $s$ is \emph{homogeneous of degree $g$} and write $\deg(s) = g$. We say that $S$ is \textit{connected} if $S_0 = k$.
The \emph{(grading) support} of $S$ is the submonoid of $\Gamma$ generated by the degrees of all the monomials along with the identity. If $S$ is connected, this submonoid inherits a partial order to be described in (\ref{eq:order}). We say a (possibly inhomogeneous) ideal has degree sequence \textit{bounded by} $\bm{d} =(d_1,\dots, d_n)\in \Gamma^n$ if the ideal can be generated by elements $f_1,\dots, f_n$ whose monomials have degree less than $d_1,\dots, d_n$. Any submonoid $\Lambda$ of the grading group $\Gamma$ has \emph{bounded factorization} if it is impossible to express an element in $\Lambda$ as an arbitrarily large sum of other elements in $\Lambda$. 
Our main theorem shows that for a fixed $\Lambda \subseteq \Gamma$, any family of connected $\Gamma$-graded polynomial rings with support contained in $\Lambda$ has Stillman bounded projective dimension if and only if $\Lambda$ has bounded factorization. 

\begin{theorem}\label{thm:polynomial_stillman}
For any degree sequence $\bm{d}$ from $\Lambda$, there is a number $N(\Lambda, \bm{d})$ depending only on $\Lambda$ and $\bm{d}$ bounding the projective dimension of any ideal with degree sequence bounded by $\bm{d}$ in any connected $\Gamma$-graded polynomial ring with support contained in $\Lambda$ if and only if $\Lambda$ has bounded factorization.
\end{theorem}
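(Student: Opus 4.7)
The plan is to prove the two directions separately by reducing each to the standard-graded setting: the forward direction to Ananyan--Hochster's Stillman bound, and the converse to a generalization of Example~\ref{ex: no bounded factorization}.

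For the ``if'' direction, I would begin by defining a length function $L\colon \Lambda \to \mathbb{Z}_{\geq 0}$ via
\[
L(g) := \sup\Bigl\{k : g = g_1 + \cdots + g_k,\ g_i \in \Lambda \setminus \{0\}\Bigr\},
\]
which is finite by bounded factorization. One quickly checks $L(0)=0$, $L(g)\geq 1$ for $g\neq 0$, and the superadditivity $L(g+h) \geq L(g) + L(h)$ obtained by concatenating factorizations; this also makes $L$ monotone under divisibility. Given any connected $\Gamma$-graded $S = k[X]$ with support in $\Lambda$, I would regrade $S$ as a $\mathbb{Z}_{\geq 0}$-graded ring by setting $\deg_\mathbb{Z}(x) := L(\deg_\Gamma(x))$ for each variable $x \in X$; connectedness forces every $\deg_\Gamma(x) \in \Lambda\setminus\{0\}$, so $\deg_\mathbb{Z}(x)\geq 1$. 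Superadditivity then implies that every monomial of $\Gamma$-degree bounded by $d_i$ has $\mathbb{Z}$-degree at most $L(d_i)$, so any ideal with $\Gamma$-degree sequence bounded by $\bm{d}=(d_1,\dots,d_n)$ is generated by inhomogeneous elements whose monomials have $\mathbb{Z}$-degree at most $L(d_i)$. Since projective dimension is invariant under regrading, Ananyan--Hochster's Stillman bound for inhomogeneous ideals in $\mathbb{Z}_+$-graded polynomial rings now produces a uniform bound depending only on the $L(d_i)$, and hence only on $(\Lambda,\bm{d})$.

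For the ``only if'' direction, I argue by contrapositive: if $\Lambda$ fails bounded factorization at some $d\in\Lambda$, there exist factorizations $d = d_{n,1} + \cdots + d_{n,m_n}$ with every $d_{n,j}\in\Lambda\setminus\{0\}$ and $m_n\to\infty$. My plan is to build a family of connected $\Gamma$-graded polynomial rings $S_n$ with support in $\Lambda$ and ideals $I_n$ of fixed degree sequence $(d,d,d)$ satisfying $\pdim(S_n/I_n)\to\infty$, contradicting the supposed Stillman bound. A natural candidate generalizing Example~\ref{ex: no bounded factorization} is the ``polarized'' McCullough family: take $S_n = k[x_1,\dots,x_{m_n},y_1,\dots,y_{m_n},z_1,\dots,z_{m_n}]$ with $\deg(x_i)=\deg(y_i)=\deg(z_i)=d_{n,i}$, and
\[
I_n = \left\langle x_1\cdots x_{m_n},\ y_1\cdots y_{m_n},\ \sum_{i=1}^{m_n}\Bigl(\prod_{j<i}x_j\Bigr)\Bigl(\prod_{j>i}y_j\Bigr)z_i \right\rangle,
\]
whose three generators each have $\Gamma$-degree $d$. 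Connectedness of $S_n$ holds whenever no positive combination of the $d_{n,i}$ vanishes in $\Gamma$, which can be arranged unless bounded factorization specifically fails at $0$; the latter degenerate case admits a separate treatment using a nearby nonzero element.

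The main obstacle is the lower bound $\pdim(S_n/I_n)\to\infty$, since McCullough's original argument exploits the full symmetry of variables sharing a single degree $1/m_n$, which is broken when the $d_{n,i}$ differ. My plan is to use the specialization $\phi\colon S_n\twoheadrightarrow k[x,y,z_1,\dots,z_{m_n}]=:T_n$ defined by $x_i\mapsto x$, $y_i\mapsto y$, $z_i\mapsto z_i$; its kernel $K$ is generated by the $2(m_n-1)$ linear forms $x_i-x_1$ and $y_i-y_1$ for $2\leq i\leq m_n$, and $\phi$ carries $I_n$ onto McCullough's ideal $I_{m_n}$. Since $\pdim_{T_n}(T_n/I_{m_n})=m_n+2$ forces $\depth(T_n/I_{m_n})=0$ by Auslander--Buchsbaum, verifying that the generators of $K$ descend to a regular sequence on $S_n/I_n$ would yield $\depth(S_n/I_n)=2(m_n-1)$ and therefore $\pdim(S_n/I_n)=3m_n-2(m_n-1)=m_n+2$. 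The forward direction, by contrast, is routine once $L$ is in hand.
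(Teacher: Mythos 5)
Your ``if'' direction is essentially the same argument as the paper's, just packaged slightly differently. Where you introduce the height function $L$ and regrade each variable $x$ by $L(\deg_\Gamma x)$, the paper simply passes to the \emph{standard} grading on $k[X]$ and uses Lemma~\ref{lem: bound on the exponents of a monomial} to bound the total degree $e_1+\cdots+e_m$ of any monomial of $\Lambda$-degree $\leq_\Lambda g$ by the bounded-factorization constant for $g$. Both are correct and both then invoke Ananyan--Hochster's bound for (possibly inhomogeneous) ideals. Your regrading by $L$ is an unnecessary extra step --- in the standard grading the monomial bound you need is exactly the number of variable factors, so $L$ buys you nothing --- but the reasoning is sound.

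Your ``only if'' direction has a genuine gap, and your diagnosis of where the difficulty lies is slightly off. You construct the ``polarized McCullough'' ideal $I_n = \langle x_1\cdots x_{m_n},\ y_1\cdots y_{m_n},\ \sum_i (\prod_{j<i}x_j)(\prod_{j>i}y_j)z_i\rangle$ in $k[x_\bullet, y_\bullet, z_\bullet]$ and need $\pdim(S_n/I_n)=m_n+2$. You frame the obstruction as McCullough's original argument ``exploiting symmetry broken when the $d_{n,i}$ differ,'' but that is a red herring: projective dimension is a grading-independent invariant, so the choice of $\Gamma$-grading is irrelevant to the computation. The real issue is that $I_n$ is a genuinely \emph{different} ideal in a \emph{different} ring ($3m_n$ variables) from McCullough's ideal $\langle x^n, y^n, x^{n-1}z_1+\cdots+y^{n-1}z_n\rangle$ in $k[x,y,z_1,\ldots,z_n]$; the cited result does not apply to it, and your proposed specialization argument --- that the $2(m_n-1)$ linear forms $x_i-x_1$, $y_i-y_1$ generating $\ker\phi$ form a regular sequence on $S_n/I_n$ --- is only stated as something to be ``verified,'' not proved, and is not obvious. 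The paper sidesteps this entirely by using the Burch/Kohn/McCullough--Seceleanu ideal $\langle \prod_i x_i,\ \prod_i y_i,\ \sum_i \prod_{j\neq i} x_j y_j\rangle$ in $k[x_1,\ldots,x_n,y_1,\ldots,y_n]$ (Example~\ref{ex: unbounded projective dimension linear}), which is already multilinear in all variables --- so one may freely assign $\deg(x_i)=\deg(y_i)=d_i$ from an unbounded factorization of $d$ --- and whose projective dimension $n+2$ is a known theorem as stated. Its third generator is inhomogeneous, but the theorem's degree-sequence condition and Ananyan--Hochster's inhomogeneous bound accommodate that. Until you actually establish the regular-sequence claim (or replace your ideal with one whose projective dimension is already known), this direction is incomplete.
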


After fixing any particular polynomial ring $S$ and ideal $I$, the projective dimension of $S/I$ does not change as one varies the grading. Hilbert's syzygy theorem guarantees that the projective dimension of \textit{every} ideal in $S$ is bounded by the (hopefully finite) number of variables in $S$. The classical Stillman's conjecture ensures a bound shared across the entire family of standard graded polynomial rings at the cost of restricting the desired class of ideals $I$ to those with a particular degree sequence. Theorem \ref{thm:polynomial_stillman} asserts that if you also allow your grading to vary, under mild conditions you must only further fix $\Lambda$ for Stillman uniformity. Example \ref{ex: no bounded factorization} fails to have Stillman bounded projective dimension exactly because the supports of the family fail to lie in a bounded factorization monoid. Letting $\deg(x)=\deg(y)=0$ and $\deg(z_i)=d$ in the same example shows that non-connected gradings also fail to have a Stillman bound.

After finding the right condition on $\Lambda$, the proof of Theorem \ref{thm:polynomial_stillman} ends up being very short. The key intuition is to leverage the known Stillman bound for the $\Z_+$-graded case by using bounded factorization to guarantee a nice regrading by $\Z_+$ for the ``if'' direction, and to construct an explicit counterexample using only non-bounded factorization for the ``only if'' direction. Even in the $\Lambda=\Z_+$ case proven by Ananyan and Hochster \cite{AnanyanHochster2016}, Theorem \ref{thm:polynomial_stillman} says something new; the same Stillman bound holds for families of varying $\Z_+$-gradings, with the bounds potentially getting tighter for polynomial rings whose support refines $\Z_+$. We end by discussing some other applications and further examples in Section \ref{sec: Applications and Examples}.

\renewcommand*{\thetheorem}{\thesection.\arabic{theorem}}

\section{$\Gamma$-graded Commutative Algebra}\label{sec:G-Graded Commutative Algebra}

Let $\Gamma$ be an abelian group, $k$ a field, and $S$ a $\Gamma$-graded polynomial ring with graded support $\Lambda$. The graded structure (and thus many of the algebraic properties) of $S$ is entirely determined by $\Lambda$. In fact, we could eliminate mentions of $\Gamma$ as long as we assume that $\Lambda$ is commutative and  \textit{cancellative}, which means that $g+h=g+h'$ implies $h=h'$ for all $g,h,h' \in \Lambda$. This comes for free when $\Lambda$ is a submonoid of a group, otherwise, it allows us to construct a unique group (its Grothendieck group) containing $\Lambda$ by adding formal inverses. We will nearly always assume that $\Lambda$ is \textit{pointed}, which means that $q+q'=0$ implies $q=q'=0$ for $q,q' \in \Lambda$. Note that $\Lambda$ must be torsion-free in order to be pointed. If $\Lambda$ is pointed and cancellative, it has a natural partial order $\leq_\Lambda$:
\begin{equation} \label{eq:order}
g \leq_\Lambda h \iff g + q =h \text{ for some } q \in \Lambda. \end{equation}
This provides a notion of positivity in $S$ by letting $S_+ = \Set*{ s \in S \given \deg(s) >_\Lambda 0}$ be the \textit{positive degree} elements of $S$. By assuming that $\Lambda$ is commutative and cancellative we can work directly with $\Lambda$-graded rings without mention of $\Gamma$. The following lemma shows that for $\Gamma$-graded polynomial rings, connected implies pointed:
\begin{lemma}\label{lem: connected implies pointed for polynomial rings}
If $S$ is a connected $\Lambda$-graded polynomial ring then the support of $S$ is pointed. 
\end{lemma}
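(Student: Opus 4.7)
The plan is to exploit the fact that in a polynomial ring, monomials form a $k$-basis, so the graded pieces are spanned by monomials of the corresponding degree. Connectedness then forces the only monomial of degree $0$ to be the constant $1$, and pointedness follows because degrees combine multiplicatively.

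First I would observe that every element of $\Lambda$ is actually the degree of some monomial (not merely a sum of degrees of monomials): if $q \in \Lambda$, write $q = \sum_i \deg(m_i)$ for monomials $m_i \in S$; since the product of monomials in a polynomial ring is itself a monomial, $m \coloneqq \prod_i m_i$ is a monomial with $\deg(m) = q$. Next I would establish the key consequence of connectedness: the only monomial of degree $0$ in $S$ is $1$. Indeed, since monomials are $k$-linearly independent, any monomial $m$ of degree $0$ would lie in $S_0 = k$ together with $1$; if $m \neq 1$, these two monomials would be linearly independent elements of $S_0$, contradicting $\dim_k S_0 = 1$.

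With these two ingredients in hand, the proof is immediate. Suppose $q, q' \in \Lambda$ satisfy $q + q' = 0$. By the first step, choose monomials $m, m' \in S$ with $\deg(m) = q$ and $\deg(m') = q'$. Then $mm'$ is a monomial of degree $q + q' = 0$, so by the second step $mm' = 1$. In a polynomial ring this forces $m = m' = 1$, hence $q = \deg(m) = 0$ and $q' = \deg(m') = 0$, which is exactly the pointedness condition.

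There is no real obstacle here beyond being careful about what ``monomial'' means in a polynomial ring over an arbitrary (possibly infinite) variable set $X$: a monomial is a finite product of variables, so it is nonzero, and the set of all such monomials is a $k$-basis for $S$. Once this is pinned down, both the realization of arbitrary $q \in \Lambda$ as a monomial degree and the conclusion that $mm' = 1 \Rightarrow m = m' = 1$ are formal. The argument uses the polynomial-ring hypothesis crucially — it would fail, for instance, in a quotient ring where relations could force nontrivial monomials into degree $0$.
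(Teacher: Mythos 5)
Your proof is correct and follows essentially the same route as the paper's: take (monomial) representatives of two degrees summing to zero, observe their product is homogeneous of degree zero hence a scalar by connectedness, and conclude both factors are scalars because $S$ is a polynomial ring. The one place you are more careful than the paper is in spelling out that every element of $\Lambda$ is genuinely the degree of a single monomial (not just a sum of monomial degrees), which the paper elides with ``by definition of $\Lambda$.''
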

\begin{proof}
Let $S = k[X]$ and suppose that $g , h \in \Lambda$ are such that $g + h = 0$. Then by definition of $\Lambda$ there exists $r , s \in S \smallsetminus \{ 0 \}$ such that $\deg(r) = g$ and $\deg(s) = h$. Then $\deg(rs) = \deg(r) + \deg(s) = g + h = 0$, so by connectedness, we have that $rs \in k$. Since $S$ is a polynomial ring over $k$, we have $r \in k$ and $s \in k$, hence $g = \deg(r) = 0$ and $h = \deg(s) = 0$.
\end{proof}

 \noindent We say that $\Lambda$ (or $S$):
\begin{itemize}
    \item is \textit{well-founded} if there are no infinite decreasing chains in $\Lambda$ under the relation $\leq_\Lambda$,
    \item has \emph{bounded factorization} if for all $g \in \Lambda$ there exists some $N$ such that for every expression $g = g_1 + \ldots + g_s$ for $g_1, \ldots, g_s \in \Lambda \smallsetminus \{ 0 \}$, we have $s \leq N$,
\end{itemize}

If $\Lambda$ is pointed, having bounded factorization allows a well-defined height function that assigns any element $\bm{d}$ in $\Lambda$ to a natural number $|\bm{d}|$ satisfying a triangle inequality (see \cite{gotti2023atomic}). Although bounded factorization implies well-foundedness, the following example shows that there exist well-founded monoids which do not have bounded factorization. 

\begin{example}[Well-founded but no bounded factorization]\label{ex: well-founded but not BFM}
Let $\Lambda$ be the submonoid of $\qq_{\geq 0}$ generated by $\{ \frac{1}{p} \mid p \in \nn \text{ is prime}\}$. $\Lambda$ is pointed since it is contained in the pointed monoid $\qq_{\geq 0}$, but it does not have bounded factorization since $1$ can be written as a sum of $p$ copies of $1/p$ for all primes $p$. However, we claim that $\Lambda$ is well-founded. Since all elements in $\Lambda$ are built by adding up reciprocals of primes, any element $h$ in $\Lambda$ can be written uniquely as $h_1 + \sum_{k=2}^\ell \frac{h_k}{p_k}$, where $0< h_k < p_k$. We wish to show that any decreasing path from $h$ is finite. Suppose we start such a path, $h \geq_\Lambda g$. Then there must exist some $g' \in \Lambda$ with $g + g' = h$. Writing each of these in the form mentioned above, it must be that $g_k+g'_k = h_k$ and thus that $g_k \leq h_k$. So any chain of elements strictly descending from $h$ can have at most $h_1 + \ldots + h_\ell$ elements.
\end{example}

\newpage 
\section{Main Results and Corollaries}\label{sec:Main Results and Corollaries}

In this section, $\Lambda$ will always be commutative and cancellative. Consider a monomial of degree $d\in \Lambda$ in a connected $\Lambda$-graded polynomial ring $k[X]$. If we completely forget the $\Lambda$-grading and give $k[X]$ the standard grading (i.e. ``flatten'' it), what degree will $f$ be? The following theorem says that having bounded factorization is exactly what is needed to bound the new  degree of $f$ in the standard grading.

\begin{lemma} \label{lem: bound on the exponents of a monomial}
    Suppose $k[X]$ is a connected $\Lambda$-graded polynomial ring with bounded factorization. For all $g \in \Lambda$, there exists $N \in \mathbb{N}$ such that if $x_1^{e_1} \ldots x_m^{e_m}$ has degree $\leq_\Lambda g$ (for $x_i \in X$) then $e_1 + \ldots + e_m \leq N$. 
\end{lemma}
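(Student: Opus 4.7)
The plan is to unwind the conclusion of bounded factorization directly. The statement being claimed is essentially an exponent bound on a monomial whose degree sits below $g$, and bounded factorization is precisely a bound on the number of nonzero summands in any expression of an element as a sum in $\Lambda$. So I would try to exhibit $g$ (or something with $g$ on the right) as a sum of $e_1 + \cdots + e_m$ nonzero elements of $\Lambda$ and then read off the bound from the bounded factorization hypothesis.

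Two preliminary observations set this up. First, by Lemma \ref{lem: connected implies pointed for polynomial rings}, connectedness of $k[X]$ forces the support $\Lambda$ to be pointed, so the partial order $\leq_\Lambda$ is honest and the bounded factorization hypothesis applies. Second, because $k[X]$ is a polynomial ring over $k$ and is connected, every variable must have nonzero degree: if $\deg(x_i) = 0$, then $x_i \in S_0 = k$, contradicting that $x_i$ is a polynomial indeterminate. Hence $\deg(x_i) \in \Lambda \smallsetminus \{0\}$ for all $i$.

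Given these, here is the execution. Fix $g \in \Lambda$ and let $N = N(g)$ be the bound on the number of nonzero summands in any factorization of $g$ coming from bounded factorization. Suppose $x_1^{e_1} \cdots x_m^{e_m}$ has degree $h \leq_\Lambda g$, so that $g = h + q$ for some $q \in \Lambda$. Expanding $h$ gives
\[
    g \;=\; \underbrace{\deg(x_1) + \cdots + \deg(x_1)}_{e_1} \;+\; \cdots \;+\; \underbrace{\deg(x_m) + \cdots + \deg(x_m)}_{e_m} \;+\; q,
\]
which exhibits $g$ as a sum of $e_1 + \cdots + e_m$ nonzero elements of $\Lambda$ (plus possibly the extra term $q$, if $q \neq 0$). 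By bounded factorization this total number of nonzero summands is at most $N$, hence $e_1 + \cdots + e_m \leq N$, as desired.

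The main conceptual point is the observation that each variable has nonzero degree, which is where connectedness is used to convert bounded factorization of $\Lambda$ into an exponent bound on monomials; once that is in hand, the rest is essentially bookkeeping. I do not foresee a substantive obstacle beyond ensuring that $\deg(x_i) \neq 0$, which the connected plus polynomial ring hypotheses handle cleanly.
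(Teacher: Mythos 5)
Your proof is correct and follows essentially the same argument as the paper's: expand the degree of the monomial as a sum of $e_1 + \cdots + e_m$ copies of the $\deg(x_i)$'s plus the slack $q$, observe that connectedness forces each $\deg(x_i)$ to be nonzero, and invoke bounded factorization. The paper states the nonzero-degree observation more tersely (``$\deg(x_j) >_\Lambda 0$ by connectedness'') but the content is identical.
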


\begin{proof}
By definition of having bounded factorization, there exists $N \in \nn$ such that if $g_1 + \ldots + g_s = g$ is a non-trivial factorization of $g$ (i.e. $g_i \neq 0$) then $s \leq N$. If $\deg(x_1^{e_1} \ldots x_m^{e_m}) \leq_\Lambda g$ then there exists some $h \in \Lambda$ with $e_1 \deg(x_1) + \ldots + e_m \deg(x_m) + h = g$. Since $\deg(x_j) >_\Lambda 0$ by connectedness, it must be that $e_1 + \ldots + e_m \leq N$ as desired. 
\end{proof}

The forward direction of Theorem \ref{thm:polynomial_stillman} comes from constructing a specific counterexample only from the knowledge that $\Lambda$ is not a bounded factorization monoid. Consider the following example, due to Burch in the local case \cite{burch1968note}, Kohn in the global case \cite{kohn1972ideals}, and translated to the language of polynomial rings by McCullough and Seceleanu \cite{mccullough2012bounding}. 

\begin{example}\label{ex: unbounded projective dimension linear}
    Let $S = k[x_1, \ldots, x_n, y_1, \ldots, y_n]$, and let $I$ be the ideal generated by the three elements $f_1 = \prod_{i = 1}^n x_i$, $f_2 = \prod_{i = 1}^n y_i$, and $f_3 = \sum_{i = 1}^n \prod_{j \neq i} x_j y_j.$ Then $\pdim_S(S / I) = n + 2$.
\end{example}

\begin{proof}[Proof of Theorem \ref{thm:polynomial_stillman}]
    For the forward direction, suppose that $\Lambda$ does not have bounded factorization. This means there exists some $d\in \Lambda$ such that for every $b\in \nn$  we can find another number $B$ larger than $b$ that is the length of a factorization $d=d_1+\dots+ d_B$, where $d_1,\dots, d_B$ are nonzero degrees in $\Lambda$. Grouping terms arbitrarily, we can take $B=b$. For every $b$ we can define a polynomial ring $S_b=k[x_1,\dots, x_b,y_1,\dots, y_b]$ with $\Lambda$-grading defined by setting $\deg(x_i)=\deg(y_i)=d_i$. Then with $f_1, f_2, f_3$ defined as in Example \ref{ex: unbounded projective dimension linear}, we have that $f_1$ and $f_2$ are homogeneous of degree $d$, while $f_3$ is inhomogeneous but with the degree of each monomial bounded by $2d$. Letting $I_b = \langle f_1,f_2,f_3\rangle$, Example \ref{ex: unbounded projective dimension linear} confirms that $\pdim_{S_b}(S_b/I_b) = b+2$ can grow arbitrarily big, so there cannot be a Stillman bound. 
    
    For the backward direction, let $S$ be any connected $\Lambda$-graded polynomial ring with bounded factorization and let $I=\langle f_1,\dots, f_n\rangle$ be an ideal with degree sequence bounded by $\bm{d}$. By Lemma \ref{lem: bound on the exponents of a monomial}, we can find a number $B$ bounding the standard degree of all monomials appearing among $f_1,\dots, f_n$ after giving $S$ the standard grading. Now, \cite[Theorem C]{AnanyanHochster2016} provides a Stillman bound for $I$.
 \end{proof}

Corollary \ref{cor: refined support gives better bounds} shows that any connected polynomial ring with support $\Lambda'\subset \Lambda$ has the same Stillman bound, i.e. $N(\Lambda',\bm{d}) \leq N(\Lambda, \bm{d})$. 

 \begin{corollary}\label{cor: refined support gives better bounds}
    If $S'$ and $S$ have nested support $\Lambda' \subset \Lambda$ then $N(\Lambda',\bm{d}) \leq N(\Lambda, \bm{d})$.
\end{corollary}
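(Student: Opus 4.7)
The plan is to observe that the bound $N(\Lambda, \bm{d})$ is defined as a supremum of projective dimensions over a certain class of pairs (ring, ideal), and that the corresponding class used to define $N(\Lambda', \bm{d})$ is a subclass. The inequality then follows from monotonicity of the supremum.

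More concretely, first I would note that any connected $\Lambda'$-graded polynomial ring $S'$ whose support lies in $\Lambda'$ can be viewed as a connected $\Lambda$-graded polynomial ring whose support lies in $\Lambda$, simply by post-composing the grading with the inclusion $\Lambda' \hookrightarrow \Lambda$. Connectedness is preserved because the degree-$0$ part is unchanged, and the support as a submonoid is still contained in $\Lambda$. Likewise, a degree sequence $\bm{d} \in (\Lambda')^n$ is in particular a degree sequence in $\Lambda^n$, and a set of generators $f_1, \ldots, f_n$ whose monomials have degree $\leq_{\Lambda'} d_i$ automatically has monomials of degree $\leq_\Lambda d_i$ under the inclusion.

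Next I would use that the projective dimension $\pdim_{S'}(S'/I)$ is an invariant of the underlying (ungraded) ring and ideal, so it does not change when we regrade $S'$ by $\Lambda$. Hence every pair $(S', I)$ contributing to the supremum defining $N(\Lambda', \bm{d})$ also contributes the same value to the supremum defining $N(\Lambda, \bm{d})$. Taking the supremum over a (possibly) larger class can only increase the bound, so $N(\Lambda', \bm{d}) \leq N(\Lambda, \bm{d})$.

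There is no real obstacle here: the statement amounts to unwinding the definitions of the quantities $N(\Lambda, \bm{d})$ and $N(\Lambda', \bm{d})$ from Theorem \ref{thm:polynomial_stillman} and observing that the class over which $N(\Lambda', \bm{d})$ is defined embeds into the class over which $N(\Lambda, \bm{d})$ is defined. The only point worth double-checking is that each of the hypotheses (connectedness, support containment, degree sequence bounded by $\bm{d}$) transfers cleanly under the inclusion $\Lambda' \subset \Lambda$, which it does by the remarks above.
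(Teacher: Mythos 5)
Your proof is correct, and it is in fact more direct than the paper's. You observe that, interpreting $N(\Lambda, \bm{d})$ as the optimal bound (supremum of $\pdim$) over the class of connected $\Gamma$-graded polynomial rings with support contained in $\Lambda$ together with an ideal of degree sequence bounded by $\bm{d}$, the class underlying $N(\Lambda', \bm{d})$ is a subclass of that underlying $N(\Lambda, \bm{d})$: support contained in $\Lambda'$ implies support contained in $\Lambda$, the partial order $\leq_{\Lambda'}$ is coarser than $\leq_\Lambda$ (so ``monomials of degree $\leq_{\Lambda'} d_i$'' implies ``monomials of degree $\leq_\Lambda d_i$''), and $\pdim$ is insensitive to the grading. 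Monotonicity of the supremum finishes the argument. The paper instead takes the specific ring $S'$ with support in $\Lambda'$, freely adjoins one new variable $y_g$ for each $g \in \Lambda \smallsetminus \Lambda'$ to build a larger ring $T = S'[Y]$ whose support is all of $\Lambda$, and appeals to Theorem~\ref{thm:polynomial_stillman} applied to $T$; this implicitly relies on the fact that $\pdim_{S'}(S'/I) = \pdim_T(T/IT)$ under the flat polynomial extension $S' \hookrightarrow T$. Your argument avoids the auxiliary construction and the flatness fact entirely, at the cost of nothing: since the theorem already bounds rings with support merely \emph{contained} in $\Lambda$, no embedding into a ring of full support is needed. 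Both proofs are valid, but yours is the cleaner unwinding of the definitions.
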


\begin{proof}
    Given such an $S$ with support $\Lambda' \subseteq \Lambda$ and, let $Y = \{ y_g \mid g \in \Lambda \smallsetminus \Lambda' \}$ be a set of variables in bijection with $\Lambda \smallsetminus \Lambda'$. We give $T := k[X \sqcup Y]$ a $\Lambda$-grading by setting $\deg_T(x) = \deg_S(x)$ for all $x \in X$ and $\deg_T(y_g) = g$. Then $T$ is a connected $\Lambda$-graded polynomial ring with $S$ as a subring, hence the grading has bounded factorization and has the bound $N(\Lambda, \bm{d})$ as in Theorem \ref{thm:polynomial_stillman}.
\end{proof}

\section{Applications and Examples}\label{sec: Applications and Examples}

We emphasize that the bound in Theorem \ref{thm:polynomial_stillman} works over families of different gradings with the following example. 

\begin{example}\label{ex: BFM but not downward-finite}
Let $\Gamma = \zz \times \zz$ and let $\Lambda = (\zz \times \zz_{>0}) \cup \{ (0 , 0) \}$. Then despite being infinitely generated, $\Lambda$ has bounded factorization.\footnote{This is true since we have a height function $\ell : \Lambda \to \zz_{\geq 0 }$ defined by $\ell(n , m) = m$, meaning $\ell$ satisfies $\ell(g) = 0$ iff $g = 0$ and $\ell(g + h) \geq \ell(g) + \ell(h)$ for all $g , h \in \Lambda$. A pointed monoid has bounded factorization if and only if it has such a height function \cite{gotti2023atomic}.} Now for any $n$, let $S_n = k[x_1, \ldots, x_n, y_1, \ldots, y_n]$ and give $S_n$ a $\Gamma$-grading by setting $\deg(x_n) = (-n, 1)$ and $\deg(y_n) = (n , 1)$. Theorem \ref{thm:polynomial_stillman} says that the family $\{S_n \}$ has Stillman uniformity because for all $n$, the support of $S_n$ is contained in $\Lambda$ which has bounded factorization. In fact we could have assigned the degrees of $x_n$ and $y_n$ to be anything inside of $\Lambda$ and the same Stillman bounds would hold. 
\end{example}

One possible application of Theorem \ref{thm:polynomial_stillman} is that by equipping a ring with a finer-graded structure, we can get better Stillman bounds for a given ideal. We demonstrate this with the following example.

\begin{example}\label{ex: bad standard graded bound}
    Let $S=k[x_1,x_2, \dots]$ be an infinite polynomial ring, and suppose we want to bound the projective dimension of a particular ideal $I$ generated by $f=x_1x_4x_7+x_{10}x_{13}x_{16}$, $g=x_2x_5x_8+x_{11}x_{14}x_{17}$, and $h=x_3x_6x_9+x_{12}x_{15}x_{18}$. Of course, we get the bound of $18$ from Hilbert's syzygy theorem since $f$, $g$, and $h$ only involve $18$ variables, but we can do better. Under the standard grading, $\langle f, g, h \rangle$ has the degree sequence $\bm{d} = (3,3,3)$ whose tight Stillman bound has been computed to be $5$ \cite{mantero2019projective}. Consider the $\Z^3$-grading on $S$ by 
\begin{equation*}
    \deg(x_i) = \begin{cases}
        (1,0,0) & \text{if } i = 1 \mod 3,\\
        (0,1,0) & \text{if } i = 2 \mod 3,\\
        (0,0,1) & \text{if } i = 0 \mod 3.
    \end{cases}
\end{equation*}

Under this new grading, the degree sequence of $\langle f, g, h \rangle$ is $\bm{e} = \{ (3,0,0), (0,3,0),(0,0,3) \}$. This grading is \textit{finer} in the sense that the family of ideals that are degree $\bm{e}$ is strictly contained in the larger family of ideals that are degree $\varphi(\bm{e})=\bm{d}$, where $\varphi$ is the natural additive map summing the entries. Therefore one would expect that $N(\bm{e}) < N(\bm{d})$ and in this case, this is true; the Stillman bound under this grading is exactly the projective dimension of $3$. This is because if $\langle f, g, h \rangle$ has degree sequence $\bm{e}$, the variables showing up among $f$, $g$, and $h$ are pairwise disjoint and therefore $f , g , h$ forms a regular sequence.
\end{example}

In general, if the bound in Theorem \ref{thm:polynomial_stillman} were made effective (e.g. as in the standard graded case  \cite{kazhdan2020properties}), one could find better Stillman bounds for a fixed polynomial ring $S$ and ideal $I$ by varying the grading. That is, given a fixed $\Z_+$-graded $S$ and ideal with degree sequence $\bm{d}$, you may consider equipping $S$ with various positive $\Gamma$-gradings with homomorphisms $\varphi: \Lambda \to \Z_+$ to obtain the bound
\begin{equation} \label{eq: pdim bounded by family of gradings}
\pdim_k(S/I) \leq \min_{\Lambda} \Set{ N(\varphi(\Lambda), \varphi(\bm{e})) \given \Lambda \text{ grades } S \text{ s.t. } \varphi(\bm{e}) = \bm{d}}
\end{equation}
which Example \ref{ex: bad standard graded bound} shows can be tight.

If $S$ is a connected $\Gamma$-graded polynomial ring, then the following example shows how a choice of $\Lambda$ is equivalent to a choice of an effective cone in toric geometry.

\begin{example}
    The Cox ring of the Hirzebruch $\mathcal{H}_2$ is $S=k[x_0,x_1,x_2,x_3]$ with the $\Z^2$-grading given by $\deg(x_0)=\deg(x_2) = (1,0)$ and $\deg(x_1) = (-2,1)$ and $\deg(x_3) = (0,1)$. Since $S$ is connected, the effective cone $\{d \in \Z^2 \mid S_d \neq 0 \}$ of $\mathcal{H}_2$ is exactly the support $\Lambda$. Since $\Lambda$ has bounded factorization, there is a number  $N(\Lambda,\bm{d})$ bounding the projective dimension of all ideals whose degree sequence is less than $\bm{d}$ in any variety whose effective cone sits inside that of $\mathcal{H}_2$.
\end{example}

As discussed in the introduction, some $\Gamma$-gradings under consideration are exotic enough to fail to admit a homomorphism $\Lambda \to \Z_+$. The following is one such example with Stillman bounded projective dimension:

\begin{example}\label{ex: no flattening map}
Let $S = k[x_0, x_1, x_2, \ldots ]$ and define $p_n$, where $n \in \mathbb{N}$, to be the $n$th prime natural number. Define the degree of any constant to be $0$, $\deg(x_0) = 1 \in \mathbb{Q}$, and $\deg(x_n) = n + \frac{1}{p_n} \in \mathbb{Q}$. Let $\Lambda \subseteq \mathbb{Q}$ be the $\mathbb{Q}$-support of this grading and then let $\Gamma$ be the subgroup of $\mathbb{Q}$ spanned by $\Lambda$. Then the grading on $S$ is connected and thus $\Lambda$ is pointed. Given any $q \in \qq$ it is clear that $\{ n \mid n + \frac{1}{p_n} \leq q \}$ is finite, so $\Lambda$ has bounded factorization. Therefore due to Theorem \ref{thm:polynomial_stillman}, $S$ has Stillman bounded projective dimension. Interestingly, there is no homomorphism $\Lambda \to \mathbb{Z}_+$. Indeed if $\varphi$ is such a map, since $1$ and $\frac{1}{p_n}$ are contained in $\Gamma$ for all $n$, we have that $\varphi(1) = p_n \varphi(\frac{1}{p_n})$ for all $n$, which is impossible as $\varphi$ takes values in the integers. 
\end{example}

\subsection*{Open problems} 
Here are some open problems raised by our work:
\begin{itemize}
    \item Are there similar bounds for a larger class of ideal invariants such as multigraded regularity as shown in \cite{erman2021generalizations}? 
    \item When is the bound in Equation (\ref{eq: pdim bounded by family of gradings}) tight?
\end{itemize}

\subsection*{Acknowledgements} Thanks to Daniel Erman, Jason McCullough, Christopher O'Neill, and Ivan Aidun for invaluable discussions. Cobb acknowledges the support of the National Science Foundation Grant DMS-2402199.

\bibliography{bib}{}

\end{document}